\numberwithin{equation}{section}
\newtheorem{theorem}{Theorem}[section]
\newtheorem{lemma}[theorem]{Lemma}
\theoremstyle{definition}
\newcommand{\C}{\ensuremath{\mathbb{C}^n}}
\newcommand{\N}{\ensuremath{\mathbb{N}}}
\renewcommand{\S}{\ensuremath{\text{Sig}_d}}
\newcommand{\Rd}{\ensuremath{{\mathbb{R}^d}}}
\newcommand{\R}{\ensuremath{{\mathbb{R}}}}
\newcommand{\D}{\ensuremath{\mathscr{D}}}
\newcommand{\J}{\ensuremath{\mathscr{J}}}
\newcommand{\F}{\ensuremath{\mathscr{F}}}
\newcommand{\G}{\ensuremath{\mathscr{G}}}
\newcommand{\Mn}{\ensuremath{\mathcal{M}_{n }}(\mathbb{C})}
\newcommand{\inrd}{\ensuremath{\int_{\Rd}}}
\newcommand{\inr}{\ensuremath{\int_{\R}}}
\newcommand{\MC}[1]{\ensuremath{\mathcal{#1}}}
\begin{document}

\title[Matrix  weighted Triebel-Lizorkin bounds]{Matrix  weighted Triebel-Lizorkin bounds: a simple stopping time proof}


\author[Joshua Isralowitz]{Joshua Isralowitz}
\address[Joshua Isralowitz]{Department of Mathematics and Statistics \\
SUNY Albany \\
1400 Washington Ave. \\
 Albany, NY  \\
12222}
\email[Joshua Isralowitz]{jisralowitz@albany.edu}

\begin{abstract}
In this paper we will give a simple stopping time proof in the $\Rd$ setting of the matrix weighted Triebel-Lizorkin bounds proved by F. Nazarov/S. Treil and A. Volberg, respectively.  Furthermore, we provide explicit matrix A${}_p$ characteristic dependence and also discuss some interesting open questions. \end{abstract}

\keywords{ weighted norm inequalities, matrix weights}

\subjclass[2010]{ 42B20}
\maketitle



\section{Introduction}For any dyadic grid $\D$ in $\mathbb{R}$ and any interval in this grid, let  \begin{equation*} h_I ^1 = |I|^{-\frac{1}{2}} \chi_I (x), \,  \,  \,  \, h_I ^0 (x) = |I|^{-\frac{1}{2}} (\chi_{I_\ell} (x) - \chi_{I_r} (x)).  \end{equation*} Now given any dyadic grid $\D$ in $\mathbb{R},$  a cube $I = I_1 \times \cdots \times I_d$, and any $\varepsilon \in \{0, 1\}^{d}$, let $h_I ^\varepsilon = \Pi_{i = 1}^d h_{I_i} ^\varepsilon$.  It is then easily seen that $\{h_I ^\varepsilon\}_{I \in \D, \  \varepsilon \in \S}$  where $\S = \{0, 1\}^d \backslash \{\vec{1}\}$ is an orthonormal basis for $L^2(\Rd)$.

The classical dyadic Littlewood-Paley estimates state that the Haar basis above is an unconditional basis for $L^p(\Rd)$ when $1 < p < \infty$, and furthermore, \begin{equation} \label{LWPT}\|f\|_{L^p} \approx \left(\inrd \left( \sum_{I \in \D} \sum_{\varepsilon \in \S} \frac{|f_I ^\varepsilon|^2}{|I|} \chi_I (x) \, dx \right)^\frac{p}{2} \, dx \right)^\frac{1}{p}.  \end{equation}  Now let $w$ be an A${}_p$ weight for $1 < p < \infty$, meaning that \begin{equation*} \sup_{\substack{I \subseteq \Rd \\ I \text{is a cube}}} \left(\frac{1}{|I|} \int_I w(x) \, dx \right)\left(\frac{1}{|I|} \int_I w ^{1 - p'} (x) \, dx \right)^{p - 1} < \infty.  \end{equation*}  The $L^p(w)$ version of \eqref{LWPT} first proved in \cite{FJW} states that
\begin{equation} \label{WeightedLWPT}\|f\|_{L^p(w)} \approx \left(\inrd \left( \sum_{I \in \D} \sum_{\varepsilon \in \S} \frac{(m_I w)^\frac{2}{p} | f_I ^\varepsilon|^2}{|I|} \chi_I (x) \, dx \right)^\frac{p}{2} \, dx \right)^\frac{1}{p}  \end{equation} where $m_I w$ is the average of $w$ over $I$.  Note that this says that $L^p(w)$ can be identified as a certain Triebel-Lizorkin space associated to the sequence $\{ (m_I w)^\frac{1}{p}\}_{I \in \D}$ (see \cite{FJW} for definitions).

Now let $n \in \mathbb{N}$ and let $W : \Rd \rightarrow \Mn$ be positive definite a. e.  (where as usual $\Mn$ is the algebra of $n \times n$ matrices with complex scalar entries). Define $L^p(W)$ to be the space of measurable functions $\vec{f} : \Rd \rightarrow \C$ with norm \begin{equation*} \|\vec{f}\|_{L^p(W)} ^p = \inrd |W^\frac{1}{p} (x) \vec{f}(x) |^p \, dx. \end{equation*}  Moreover, assume $W$ is a matrix A${}_p$ weight, meaning that \begin{equation} \label{MatrixApDef} \sup_{\substack{I \subset \R^d \\ I \text{ is a cube}}} \frac{1}{|I|} \int_I \left( \frac{1}{|I|} \int_I \|W^{\frac{1}{p}} (x) W^{- \frac{1}{p}} (t) \|^{p'} \, dt \right)^\frac{p}{p'} \, dx  < \infty \end{equation} where $p'$ is the conjugate exponent of $p$.  Note that matrix A${}_p$ weights arise naturally in various contexts and have drawn a lot of attention recently (see \cite{BPW, CMR, CW, I, IKP, NT, R, TV, V} for example.)

A challenging question (definitively answered in \cite{NT, V} when $d = 1$) is whether \eqref{WeightedLWPT} can be extended to the matrix A${}_p$ setting, and if so what the ``Littlewood-Paley expression" would look like.  To answer this requires some more notation.  It is well known (see \cite{G} for example) that for a matrix weight $W$,  a cube $I$, and any $1 < p < \infty$, there exists positive definite matrices $V_I$ and  $V_I '$ such that  $|I|^{- \frac{1}{p}} \|\chi_I W^\frac{1}{p}  \vec{e}\|_{L^p} \approx |V_I \vec{e}|$ and $|I|^{- \frac{1}{p'}} \|\chi_I W^{-\frac{1}{p}} \vec{e}\|_{L^{p'}} \approx |V_I ' \vec{e}|$ for any $\vec{e} \in \C$, where $\|\cdot \|_{L^p}$ is the
canonical $L^p(\R^d;\C)$ norm and the notation $A \approx B$ as usual means that two quantities $A$ and $B$ are bounded above and below by a constant multiple of each other (which unless otherwise stated will not depend on the weight $W$).  Note that it is easy to see that $\|V_I V_I ' \| \geq 1$ for any cube $I$.  We will say that $W$ is a matrix A${}_p$ weight if the product $V_I V_I'$ has uniformly bounded matrix norm with respect to all cubes $I \subset \R^d$ (note that this condition is easily seen to be equivalent to (\ref{MatrixApDef}), see p. 275 in \cite{R} for example.) Also note that when $p = 2$ we have $V_I = (m_I W)^\frac{1}{2}$ and $V_I ' = (m_I (W^{-1}))^{\frac{1}{2}}$ where $m_I  W$ is the average of $W$ on $I$, so that the matrix A${}_2$ condition takes on a particularly simple form that is similar to the scalar A${}_2$ condition.

Let us emphasize to the reader that while the matricies $V_I$ and $V_I'$ are not averages of $W$, it is nonetheless very useful to think of them as appropriate averages.  With this in mind, it was proved in \cite{NT, V} that the correct matrix weighted version of \eqref{WeightedLWPT} is as follows: if $1 < p < \infty$ and $W$ is a matrix A${}_p$ weight on $\mathbb{R}$, then for any $\vec{f} \in L^p(W)$  we have \begin{equation}\label{LpEmbedding} \|\vec{f}\|_{L^p(W)} \approx \left(\inr \left(\sum_{I \in \D}  \frac{|V_I \vec{f}_I |^2 }{|I|} \chi_I (x) \right)^\frac{p}{2} \, dx \right)^\frac{1}{p} \end{equation}  where $\vec{f}_I$ is the vector of Haar coefficients of the components of $\vec{f}$ with respect to $I$.  Note that \eqref{LpEmbedding} has a number of applications to operators related to singular integral operators (see \cite{IKP, I, NT, V, BPW} for example) and in some sense has the effect of allowing one to ``remove" $W$ and replace it with the $V_I's$.  Furthermore, while is it probably likely that the arguments in \cite{V} can be extended to $\Rd$, \cite{NT} primarily involves Bellman function arguments, which are nontrivial (though not necessarily difficult) to extend to $\Rd$.

The purpose of this paper is to provide a simple stopping time proof of \eqref{LpEmbedding} for $\Rd$ that, unlike \cite{NT, V}, closely resembles known scalar techniques, and in particular is a modification of the stopping time techniques from \cite{KP}  (note that this modification was done in the $p = 2$ setting in \cite{P} for operator valued weights).  Moreover, we also track the A${}_p$ characteristic in \eqref{LpEmbedding}. In particular, we will precisely prove the following, where here $\lceil p \rceil$ is the smallest integer greater or equal to $p$.

\begin{theorem}  \label{MainThm} If $1 < p < \infty$ and $W$ is a matrix A${}_p$ weight then the following two inequalities hold for any $\vec{f} \in L^p(W)$:  \begin{equation*} \|\vec{f}\|_{L^p(W)} \lesssim  \|W\|_{\text{A}_p} ^{ \frac{1}{p} + \frac{{\lceil p \rceil}}{p}} \left(\inrd \left(\sum_{I \in \D} \sum_{\varepsilon \in \S} \frac{|V_I \vec{f}_I ^\varepsilon|^2 }{|I|} \chi_I (x) \right)^\frac{p}{2} \, dx \right)^\frac{1}{p} \end{equation*} and \begin{equation*} \left(\inrd \left(\sum_{I \in \D} \sum_{\varepsilon \in \S} \frac{|V_I \vec{f}_I ^\varepsilon|^2 }{|I|} \chi_I (x) \right)^\frac{p}{2} \, dx \right)^\frac{1}{p} \lesssim \|W\|_{\text{A}_p} ^{ \frac{2}{p} + \frac{{\lceil p' \rceil}}{p}}  \|\vec{f}\|_{L^p(W)} \end{equation*} \end{theorem}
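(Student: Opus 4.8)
The plan is to prove the two inequalities in Theorem \ref{MainThm} separately, exploiting a duality that reduces the second to (a variant of) the first applied to the dual weight $W^{1-p'}$, for which $\|W^{1-p'}\|_{\text{A}_{p'}} = \|W\|_{\text{A}_p}^{p'-1}$. So the heart of the matter is the first inequality: estimating $\|\vec f\|_{L^p(W)}$ from above by the $L^p$-norm of the square function $S_{\D}\vec f(x) = \bigl(\sum_{I\in\D}\sum_{\varepsilon\in\S}|I|^{-1}|V_I\vec f_I^\varepsilon|^2\chi_I(x)\bigr)^{1/2}$. I would start from the scalar Littlewood–Paley identity \eqref{LWPT} applied to the function $W^{1/p}\vec f$ componentwise, which gives $\|\vec f\|_{L^p(W)}^p = \|W^{1/p}\vec f\|_{L^p}^p \approx \int_{\Rd}\bigl(\sum_{I,\varepsilon}|I|^{-1}|(W^{1/p}\vec f)_I^\varepsilon|^2\chi_I\bigr)^{p/2}\,dx$, and the whole game is to replace the Haar coefficient $(W^{1/p}\vec f)_I^\varepsilon$ of the product by $V_I\vec f_I^\varepsilon$ at the cost of the stated $\text{A}_p$ power. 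The natural device is a corona/stopping-time decomposition of $\D$ adapted to $W$: for a top cube $Q$, let its stopping children be the maximal $I\subset Q$ for which $V_I$ fails to be comparable (in the two-sided operator-norm sense, with a large constant depending on $\|W\|_{\text{A}_p}$) to $V_Q$; on the resulting corona, $W^{1/p}(x)V_I^{-1}$ and $V_IV_Q^{-1}$ are under control, so $(W^{1/p}\vec f)_I^\varepsilon$ is, up to lower-order corona-error terms, $W^{1/p}(x)$ times something built from $\vec f_I^\varepsilon$, and one converts this to $V_I\vec f_I^\varepsilon$ using the defining property of $V_I$ together with the $\text{A}_p$ bound $\|V_IV_I'\|\lesssim\|W\|_{\text{A}_p}$.

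Concretely, the steps I would carry out are: (i) set up the stopping cubes $\MC{G}$ with a Carleson (packing) estimate $\sum_{Q'\subset Q,\,Q'\in\MC{G}}|Q'|\leq (1-\delta)|Q|$ for $\delta\sim\|W\|_{\text{A}_p}^{-1}$, so the number of relevant "generations" before the averages drop is $\sim\|W\|_{\text{A}_p}$ — this is where a power of $\|W\|_{\text{A}_p}$ enters; (ii) on each corona $\chG(Q)$, use the near-constancy of $V_I$ to write $|V_I\vec f_I^\varepsilon|\approx|V_Q\vec f_I^\varepsilon|$ and to absorb $W^{1/p}(x)$ against $V_Q^{-1}$, reducing matters on the corona to a scalar Littlewood–Paley estimate for the scalar function $x\mapsto|V_Q\vec f(x)|$-type object, paying one factor of $\|W\|_{\text{A}_p}^{1/p}$ from the comparison $|I|^{-1/p}\|\chi_I W^{1/p}\vec e\|_{L^p}\approx|V_I\vec e|$ and the $\text{A}_p$ relation; (iii) sum over coronas: the $\ell^p$ (triangle-inequality) aggregation of the $L^p$ pieces over the tree of top cubes, combined with the Carleson estimate from (i), is where $\lceil p\rceil$ shows up — one splits the square function into $\sim p$ pieces or iterates an $L^p$-almost-orthogonality $\sim\lceil p\rceil$ times, each time losing $\|W\|_{\text{A}_p}^{1/p}$, giving the total exponent $\tfrac1p+\tfrac{\lceil p\rceil}{p}$. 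For the second inequality I would dualize: test $\|S_\D\vec f\|_{L^p}$ against an $L^{p'}$ vector field, move the pairing onto Haar coefficients, insert $V_I^{-1}V_I\cdot$ and use that $(V_I')^{-1}\approx$-comparison with $W^{-1/p}$-averages to reassemble $\|\vec f\|_{L^p(W)}$ times the first-type inequality applied to $W^{1-p'}$, which contributes $\|W^{1-p'}\|_{\text{A}_{p'}}^{1/p'+\lceil p'\rceil/p'}=\|W\|_{\text{A}_p}^{1/p+\lceil p'\rceil/p}$, times one more $\|V_IV_I'\|\lesssim\|W\|_{\text{A}_p}$ factor to finish with exponent $\tfrac2p+\tfrac{\lceil p'\rceil}{p}$.

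The main obstacle, and the place where the argument must be genuinely careful rather than routine, is step (ii)–(iii): controlling the error terms produced when one replaces the Haar coefficients of the \emph{product} $W^{1/p}\vec f$ by those of $\vec f$ modulated by the frozen matrix $V_Q$. Unlike the scalar case, $W^{1/p}$ and $\vec f$ do not commute, the matrices $V_I$ for different $I$ need not be simultaneously diagonalizable, and the "average of the product is not the product of averages" discrepancy is purely matricial; handling it forces one to expand the product into a main term plus a telescoping sum of corona-increments of $V$, and then to bound each increment's contribution by a piece of the square function using the martingale/Carleson structure. Getting the $\|W\|_{\text{A}_p}$-dependence sharp (i.e. genuinely $\lceil p\rceil$ rather than a power that blows up near integer $p$, or worse a non-polynomial bound) requires that each of these $\sim\lceil p\rceil$ iterations be performed with a clean, weight-independent constant except for the single explicit $\|W\|_{\text{A}_p}^{1/p}$ factor, which is the delicate bookkeeping at the core of the proof. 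The Carleson/packing estimate of step (i) and the scalar Littlewood–Paley input are standard and will be quoted; the vector-valued corona bookkeeping is where all the work lies.
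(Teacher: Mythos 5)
Your overall architecture --- a stopping time governed by two-sided comparability of the $V_I$'s, an $L^p$ almost-orthogonality argument producing the $\lceil p\rceil$ factor, and a duality reduction of the second inequality to the first applied to $W^{1-p'}$ with the exponent arithmetic $\tfrac{1}{p-1}(\tfrac{1}{p'}+\tfrac{\lceil p'\rceil}{p'})=\tfrac1p+\tfrac{\lceil p'\rceil}{p}$ --- matches the paper, and your duality bookkeeping is exactly right. But the core of your first inequality has a genuine gap. You propose to start from the unweighted Littlewood--Paley identity for $W^{1/p}\vec f$ and then replace the Haar coefficients $(W^{1/p}\vec f)_I^\varepsilon$ of the \emph{product} by $V_I\vec f_I^\varepsilon$, absorbing the discrepancy into ``corona-error terms'' and ``a telescoping sum of corona-increments of $V$.'' You correctly identify this as the main obstacle, but you do not resolve it: controlling those error terms amounts to bounding paraproduct-type operators with matrix symbols on $L^p$, which is not easier than the theorem itself --- in the literature such paraproduct bounds are deduced \emph{using} Theorem \ref{MainThm} (cf. \cite{IKP}) --- so your route is close to circular as stated.

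The paper sidesteps this entirely: it never computes a Haar coefficient of a product. One writes $W^{1/p}\vec f=(W^{1/p}M_{W,p}^{-1})(M_{W,p}\vec f)$ as a pointwise identity, where $M_{W,p}$ is the constant Haar multiplier with symbols $V_I$, notes that $\|M_{W,p}\vec f\|_{L^p}$ \emph{is} your square function by unweighted Littlewood--Paley theory (its Haar coefficients are $V_I\vec f_I^\varepsilon$ by construction), and then proves the unweighted operator bound $\|W^{1/p}M_{W,p}^{-1}\vec g\|_{L^p}^p\lesssim\|W\|_{\text{A}_p}^{1+\lceil p\rceil}\|\vec g\|_{L^p}^p$ by the stopping-time and almost-orthogonality scheme. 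Two further bookkeeping discrepancies: in the paper the packing constant is a universal $1/2$, achieved by taking the stopping threshold $\lambda_2\sim\|W\|_{\text{A}_p}^{p'/p}$, rather than $1-\delta$ with $\delta\sim\|W\|_{\text{A}_p}^{-1}$; and the factor $\|W\|_{\text{A}_p}^{\lceil p\rceil}$ does not come from counting generations but from the almost-orthogonality decay rate $c=2^{-(1-p/q)}$, with $q$ the reverse H\"older exponent of the scalar A${}_p$ weights $|W^{1/p}(x)\vec e|^p$, so that $(1-c)^{-\lceil p\rceil}\sim\|W\|_{\text{A}_p}^{\lceil p\rceil}$. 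If you reorganize your argument around the operator $W^{1/p}M_{W,p}^{-1}$ instead of around Haar coefficients of $W^{1/p}\vec f$, the obstacle you flagged disappears and the rest of your plan goes through.
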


 Note that in some sense ``half" of \eqref{LpEmbedding} was proved in \cite{NT, V} for A${}_{p, \infty}$ weights when $d = 1$ (see either of these for definitions, and see Theorem \ref{ApinfThm} for the precise result).  Furthermore, as we will explain in the last section, our proof, with the help of a Lemma $3.1$ in \cite{V} also proves this.  Also, we will discuss what is known in the literature for the A${}_2$ dependence in \eqref{LpEmbedding} in the case $p = 2$.

Finally, it is hoped that the ``matrixizing" of the ideas in \cite{KP} for $p \neq 2$ will have applications to other challenging yet nonetheless interesting problems involving $L^p(W)$ where cancellation plays a key role.  Furthermore, it would be very interesting to explore whether a Besov space version of Theorem \ref{MainThm} holds (see \cite{R} for results similar to this but outside of the Haar realm.)  In particular, note that Theorem \ref{MainThm} was instrumental in proving (in \cite{IKP}) the equivalency between a dyadic and a continuous characterization of the boundedness of the commutators $[B, R_j]$ on $L^p(W)$  in terms of $B$ (for $B$ a locally integrable $\Mn$ valued function and $R_j$ being any of the Riesz transforms).  Therefore, it is obviously natural to expect that a Besov space version of Theorem \ref{MainThm} would help in proving Schatten $p$ class characterizations of $[B, R_j]$ on $L^2(W)$  in terms of $B$.

\section{Main Lemma}
In this section we will prove a crucial lemma (Lemma \ref{WtdHaarMultThm}).  In fact, the proof of Theorem \ref{MainThm} will follow almost immediately from Lemma \ref{WtdHaarMultThm} and some elementary approximation and duality arguments.  We will first introduce some stopping time definitions from \cite{KP} and prove some preliminary lemmas.  For any dyadic grid $\D$ in $\Rd$ and $I \in \D$, let $\J(I)$ be the collection of dyadic cubes in $\D(I)$ that are maximal with respect to a certain property.  Let $\F(I)$ be the collection of all dyadic cubes in $\D(I)$ that are not contained in any $J \in \J(I)$.  We say the property is admissible if $I \in \F(I)$ for every $I \in \D$.  Given an admissible property, let $\J^0 (I) := I$ and for $j \in \mathbb{N}$ inductively let $\J^j (I)$ be the collection of cubes belonging to $\J(J)$ for some $J \in \J^{j - 1}(I)$.  Similarly define   $\F^j (I)$ be the collection of cubes belonging to $\F(J)$ for some $J \in \J^{j - 1}(I)$. Note that the admissible property clearly gives us that $\D(I) = \cup_{j = 1}^\infty \F^j (I)$. Also clearly the $\F^j(I)$'s are disjoint (with respect to $j$) collections of cubes. We will refer to the stopping time above by simply $\J$.

We will say the stopping time above is decaying if there exists $0 < c < 1$ such that \begin{equation*} \sup_{I \in \D} \frac{1}{|I|} \sum_{J \in \J(I)} |J| \leq c. \end{equation*}    Clearly by iteration this gives us that \begin{equation*} \sup_{I \in \D} \frac{1}{|I|} \sum_{J \in \J^j(I)} |J| \leq c^j  \end{equation*}

Now given a decaying stopping time, fix some large cube $I_0 \in \D$ and for notational simplicity let $\F^j = \F^j (I_0)$ and define $\J^j$ similarly.  Also for some $\vec{f} \in L^p$ let \begin{equation*} \Delta_j \vec{f} = \sum_{I \in \F^j} \sum_{\varepsilon \in \S} \vec{f} _I ^\varepsilon h_I ^\varepsilon \end{equation*}    The proof of the following is exactly the same as the proof of Lemma $7$ in \cite{KP} and therefore will be omitted.  \begin{lemma} \label{KPLem1} Let $\J$ be a decaying stopping time.  For any $1 < p < \infty$ we have that \begin{equation*}  \sum_{j = 0}^\infty \|\Delta_j \vec{f} \|_{L^p} ^p \lesssim \|\vec{f}\|_{L^p} ^p \end{equation*} \end{lemma}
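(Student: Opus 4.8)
The plan is to split $\Delta_j\vec f$ by scale into two pieces supported on small, almost-disjoint sets, and then to sum their $L^p$ norms locally, via the dyadic Carleson embedding theorem; the decaying hypothesis enters precisely as a Carleson condition on the stopping cubes. (A naive \emph{global} H\"older bound on these supports is far too lossy, which is exactly why the decay is genuinely needed in the range $1<p<2$; what we are really doing is estimating the $L^p$ norms of a martingale difference sequence, since $\Delta_j\vec f=E_j\vec f-E_{j-1}\vec f$ for the expectations $E_j$ that average $\vec f$ over the stopping cubes of generation $j$.) First I would decompose: since $\F^j=\bigcup_{J\in\J^{j-1}}\F(J)$ and, for $I\in\F(J)$, the Haar functions $h_I^\varepsilon$ are supported in $J$ while the cubes of $\J^{j-1}$ are pairwise disjoint, one gets $\Delta_j\vec f=\sum_{J\in\J^{j-1}}P_J\vec f$ with $P_J\vec f:=\sum_{I\in\F(J)}\sum_{\varepsilon\in\S}\vec f_I^\varepsilon h_I^\varepsilon$ supported in $J$, hence $\|\Delta_j\vec f\|_{L^p}^p=\sum_{J\in\J^{j-1}}\|P_J\vec f\|_{L^p}^p$. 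Writing the Haar expansion of $\vec f$ over $J$ and over each stopping child $K\in\J(J)$ gives
\[
P_J\vec f=(\vec f-m_J\vec f)\,\chi_{J\setminus\bigcup_{K\in\J(J)}K}+\sum_{K\in\J(J)}(m_K\vec f-m_J\vec f)\,\chi_K ,
\]
where $m_J\vec f$ is the average of $\vec f$ over $J$. Summing over $J\in\J^{j-1}$ writes $\Delta_j\vec f=u_j+v_j$, with $u_j$ supported on $A_j:=\bigcup_{J\in\J^{j-1}}\big(J\setminus\bigcup_{K\in\J(J)}K\big)$ and $v_j$ on $\bigcup_{J\in\J^j}J$; it then suffices to bound $\sum_{j\ge1}\|u_j\|_{L^p}^p$ and $\sum_{j\ge1}\|v_j\|_{L^p}^p$ by $\|\vec f\|_{L^p}^p$ (the $j=0$ term is trivial).

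The decaying hypothesis is used twice. Directly, $\big|\bigcup_{J\in\J^j}J\big|=\sum_{J\in\J^j}|J|\le c^j|I_0|$, so the $A_j$ are pairwise disjoint and $\bigcup_jA_j=I_0$ up to a null set. More importantly, I would show that $\bigcup_{k\ge0}\J^k$ (with $\J^0=\{I_0\}$) is a Carleson family: $\sum_{k\ge0}\sum_{J\in\J^k,\,J\subseteq R}|J|\le(1-c)^{-1}|R|$ for every cube $R$. Using admissibility, the portion of the stopping tree lying below a cube $R$ is exactly what one obtains by running the same stopping time started from $R$, so this follows by iterating $\sum_{J\in\J(R)}|J|\le c|R|$ and summing a geometric series.

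Granting this, both estimates are routine uses of the triangle inequality and the dyadic Carleson embedding theorem. On $A_j$ one has $|u_j|\le|\vec f|+m_{J(x)}|\vec f|$, where $J(x)\in\J^{j-1}$ is the cube containing $x$, so
\[
\sum_{j\ge1}\|u_j\|_{L^p}^p\lesssim\int_{\bigcup_jA_j}|\vec f|^p+\sum_{k\ge0}\sum_{J\in\J^k}\big(m_J|\vec f|\big)^p|J|\lesssim\|\vec f\|_{L^p}^p ,
\]
the first term by disjointness of the $A_j$ and the second by Carleson embedding applied to $\bigcup_k\J^k$. Likewise, as the cubes $K$ occurring in $v_j$ range over $\J^j$ and $\sum_{K\in\J(J)}|K|\le c|J|$,
\[
\sum_{j\ge1}\|v_j\|_{L^p}^p=\sum_{j\ge1}\sum_{J\in\J^{j-1}}\sum_{K\in\J(J)}|m_K\vec f-m_J\vec f|^p|K|\lesssim\sum_{k\ge0}\sum_{J\in\J^k}\big(m_J|\vec f|\big)^p|J|\lesssim\|\vec f\|_{L^p}^p ,
\]
again by Carleson embedding. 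Adding the two bounds proves the lemma; this is the argument of Lemma~7 in \cite{KP}.

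The main obstacle is the Carleson property of $\bigcup_k\J^k$: the decaying condition gives the geometric decay $\sum_{J\in\J^j}|J|\le c^j|I_0|$ at once, but upgrading this to a Carleson bound for \emph{every} cube $R$ needs the (standard but slightly fiddly) observation that the stopping structure below $R$ is self-similar, namely a run of the given stopping time started at $R$; everything afterwards is soft. It is worth emphasising that the estimate being proved is genuinely stronger than the square-function bound $\big\|\big(\sum_j|\Delta_j\vec f|^2\big)^{1/2}\big\|_{L^p}\lesssim\|\vec f\|_{L^p}$ exactly when $1<p<2$, and that what rescues it there is the smallness of the supports of $u_j$ and $v_j$ — but only once that smallness is exploited locally, through the Carleson structure, rather than through a crude global estimate.
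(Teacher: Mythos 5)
Your proof is correct and is essentially the argument the paper omits by reference to Lemma~7 of \cite{KP}: the decomposition $\Delta_j\vec f=u_j+v_j$ into a piece living on the pairwise disjoint sets $J\setminus\bigcup\J(J)$ and a piece recording the jumps of averages at the stopping cubes, combined with the Carleson packing property of $\bigcup_k\J^k$ and dyadic Carleson embedding, is exactly the intended route. One cosmetic point: for an arbitrary cube $R$ the stopping tree below $R$ is not literally ``the stopping time restarted at $R$'' (the maximal stopping cubes inside $R$ need not form $\J(R)$), but your Carleson bound still follows by summing the geometric decay over the maximal stopping cubes contained in $R$, so this does not affect the argument.
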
 Note of course that the implicit constant above depends on the rate of decay of $\J$, but \textit{not} on the cube $I_0$.  As we will see later, the dependence on the decay rate is not important.

The proof of the next lemma is exactly the same as the proof of Lemma $8$ in \cite{KP}.  We will however provide a complete proof as we will need to carefully track the constants involved.

\begin{lemma} Let $\J$ be a decaying stopping time and let $1 < p < \infty$.  Let $T$ be a linear operator on some class of measurable $\C$ valued functions and suppose that $T \vec{f} = \sum_{j = 1}^\infty T_j \vec{f}$ pointwise a.e. for all $\vec{f}$ in this class, where $T_j = T \Delta_j$ and each $T_j \vec{f}$ is measurable.   Assume that there exists $0 < c < 1$ and $C > 0$ where \begin{equation*} \inrd |T_k \vec{f}| ^\frac{p}{2} \, |T_j \vec{f}|^\frac{p}{2} \, dx \leq  C c^{|k - j|} \|\Delta_j \vec{f}\|_{L^p} ^\frac{p}{2} \| \Delta_k \vec{f}\|_{L^p} ^\frac{p}{2}. \end{equation*} Then we have the bound \begin{equation*} \|T \vec{f}\|_{L^p} ^p  \lesssim \frac{C}{(1 - c)^{\lceil p\rceil}} \|\vec{f}\|_{L^p} ^p\end{equation*} for any $\vec{f}$ in this class  (where again the implicit constant above depends on the rate of decay of $\J$ but  \textit{not} on $I_0$.) \end{lemma}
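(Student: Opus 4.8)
The plan is to estimate $\|T\vec f\|_{L^p}^p$ by expanding $|T\vec f|^p = |\sum_j T_j\vec f|^p$ and reducing to the bilinear hypothesis. Write $g_j = |T_j\vec f|$, so that pointwise $|T\vec f| \le \sum_{j\ge 1} g_j$ and hence $\|T\vec f\|_{L^p}^p \le \int_{\Rd} \bigl(\sum_j g_j\bigr)^p\,dx$. Since $p$ may not be an integer, I would pass to the integer exponent $m := \lceil p\rceil$ by H\"older's inequality with exponents $m/p$ and $m/(m-p)$ applied fiberwise after introducing a suitable weight; more cleanly, I would first prove the estimate with $p$ replaced by the integer $m$ and a \emph{decaying} scalar sequence, then interpolate, or — the route I actually expect to take, following Lemma~8 of \cite{KP} — use the elementary pointwise inequality
\begin{equation*}
\Bigl(\sum_{j} g_j\Bigr)^p \le \Bigl(\sum_j g_j\Bigr)^{p} ,
\end{equation*}
split the sum $\sum_j g_j$ according to a geometric blocking and estimate $\bigl(\sum_j g_j\bigr)^p \lesssim_m \sum_{k_1,\dots,k_{m}} g_{k_1}^{p/m}\cdots g_{k_m}^{p/m}$ after symmetrizing. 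The key point is that raising $\sum_j g_j$ to the power $p \le m$ and using $\sum g_j \le (\sum g_j)$, one gets $(\sum_j g_j)^p \le (\sum_j g_j)^{m\cdot (p/m)}$, and then $(\sum_j g_j)^m = \sum_{k_1,\dots,k_m} g_{k_1}\cdots g_{k_m}$, so $(\sum_j g_j)^p \le \bigl(\sum_{k_1,\dots,k_m} g_{k_1}\cdots g_{k_m}\bigr)^{p/m}$. Since $p/m \le 1$, subadditivity of $t\mapsto t^{p/m}$ gives $(\sum_j g_j)^p \le \sum_{k_1,\dots,k_m} (g_{k_1}\cdots g_{k_m})^{p/m}$, i.e.\ $\int (\sum_j g_j)^p\,dx \le \sum_{k_1,\dots,k_m} \int \prod_{i=1}^m |T_{k_i}\vec f|^{p/m}\,dx$.

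Next I would bound each term $\int_{\Rd} \prod_{i=1}^{m} |T_{k_i}\vec f|^{p/m}\,dx$. Order the indices as $k_{\sigma(1)} \le \cdots \le k_{\sigma(m)}$ for a permutation $\sigma$; pairing the largest with the smallest, the second-largest with the second-smallest, and so on, and applying H\"older's inequality with $m/2$ pairs each getting exponent $2/m$ summing to $1$ (here is where parity of $m$ is handled — if $m$ is odd one pair is a single factor with the appropriate exponent, or one simply notes $|T_{k}\vec f|^{p/m}\le$ a geometric-mean bound), I reduce to products of the pairwise quantities $\int |T_{k}\vec f|^{p/2}|T_{j}\vec f|^{p/2}\,dx$, each of which is controlled by $C c^{|k-j|}\|\Delta_j\vec f\|_{L^p}^{p/2}\|\Delta_k\vec f\|_{L^p}^{p/2}$ by hypothesis. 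Multiplying out, $\int \prod_i |T_{k_i}\vec f|^{p/m}\,dx \lesssim C^{m/2}\, c^{\frac12\sum_{\text{pairs}}|k_i-k_j|}\,\prod_i \|\Delta_{k_i}\vec f\|_{L^p}^{p/m}$ (using $\|\Delta_k\vec f\|_{L^p}^{p/2}$ raised to fractional powers, and AM--GM to redistribute the $c^{|k-j|}$ factors so each exponent becomes $\ge |k_i - k_{i'}|$ for consecutive sorted indices). Summing the geometric factors over all $(k_1,\dots,k_m)\in\N^m$ and using $\sum_{k_1\le\cdots\le k_m} c^{\,\text{const}\sum(k_{i+1}-k_i)} \prod \|\Delta_{k_i}\vec f\|_{L^p}^{p/m} \le (1-c)^{-(m-1)} \cdot \bigl(\sum_j \|\Delta_j\vec f\|_{L^p}^p\bigr)$ — the last inequality by H\"older in the $m$ summation indices after peeling off the geometric weight, which also forces the power $p$ on the $\Delta_j$ norms — gives
\begin{equation*}
\|T\vec f\|_{L^p}^p \lesssim \frac{C^{m/2}}{(1-c)^{m}} \sum_{j} \|\Delta_j\vec f\|_{L^p}^p .
\end{equation*}
Finally, applying Lemma~\ref{KPLem1} to replace $\sum_j \|\Delta_j\vec f\|_{L^p}^p$ by $\|\vec f\|_{L^p}^p$ (absorbing the decay-rate-dependent constant there into the implicit constant) yields the claimed bound $\|T\vec f\|_{L^p}^p \lesssim \frac{C}{(1-c)^{\lceil p\rceil}}\|\vec f\|_{L^p}^p$, once one observes $C^{m/2}$ can be reduced to $C$ by renormalizing (or by noting the hypothesis can be assumed with $C$ in place of $C^{m/2}$ after rescaling $\vec f$) — in \cite{KP} this cosmetic point is handled by writing the hypothesis as a bound by $Cc^{|k-j|}$ and tracking only the \emph{linear} appearance of $C$.

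The main obstacle is the bookkeeping in the multilinear H\"older step: one must choose the pairing of the $m$ sorted indices so that the accumulated exponents of $c$, after AM--GM redistribution, dominate $\sum_{i}(k_{(i+1)}-k_{(i)})$ (the total ``spread''), so that the subsequent summation over $\N^m$ converges with the correct power $(1-c)^{-(m-1)}$ — and then the extra factor $(1-c)^{-1}$ from the final H\"older (needed to get exponent $p$, not $p/m$, on the $\|\Delta_j\vec f\|_{L^p}$) brings the total to $(1-c)^{-m} = (1-c)^{-\lceil p\rceil}$. Getting the exponent on $(1-c)$ to come out as exactly $\lceil p\rceil$ and not something larger is the delicate part, and is precisely why the proof is reproduced here rather than merely cited. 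Everything else — the pointwise subadditivity, the invocation of Lemma~\ref{KPLem1}, the $I_0$-independence — is routine and parallels \cite{KP} verbatim.
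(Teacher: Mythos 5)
Your proposal is correct and follows essentially the same route as the paper: pointwise subadditivity with exponent $p/n$ for $n=\lceil p\rceil$ to expand $|T\vec f|^p$ into an $n$-fold sum, a multilinear H\"older step to reduce to the pairwise hypothesis, summation of the geometric factors, a final H\"older in the summation indices to upgrade $\|\Delta_{j_i}\vec f\|_{L^p}^{p/n}$ to $\|\Delta_{j}\vec f\|_{L^p}^{p}$, and then Lemma \ref{KPLem1}. The one structural difference is the pairing: you pair largest-with-smallest among the sorted indices, whereas the paper uses the \emph{cyclic} consecutive pairing $(j_1,j_2),(j_2,j_3),\dots,(j_n,j_1)$ with all $n$ H\"older exponents equal to $n$, so that each factor $|T_{j_i}\vec f|^{p/n}$ appears in exactly two pairs contributing $p/(2n)$ each. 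The cyclic choice is cleaner on exactly the two points you flag as delicate: it works uniformly for odd $n$ (no singleton factor to patch), and the constant comes out as $C^{n\cdot(1/n)}=C$ on the nose, with no renormalization needed --- note that in your scheme the constant is likewise $C^{(2/m)\cdot(m/2)}=C$, not $C^{m/2}$, once the exponents $2/m$ are actually applied to each pairwise integral. Both pairings produce decay only in the total spread $j_n-j_1=\sum_i(j_{i+1}-j_i)$, but since $c^{\alpha(j_n-j_1)}$ factors over the gaps this suffices for the geometric summation with the stated power of $(1-c)^{-1}$.
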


\begin{proof} Let $n = {\lceil p\rceil}$.  Then by Jensen's inequality,  H\"{o}lder's inequality, and the assumption above,  we have \begin{align*}\inrd & |T\vec{f} |^p \, dx  \\ & \leq \inrd \left( \sum_{j = 1}^\infty |T_j \vec{f}| ^\frac{p}{n}  \right)^{n} \, dx  \\ & = \sum_{(j_1, \ldots, j_{n}) \in \N ^{n}} \inrd  |T_{j_1} \vec{f}| ^\frac{p}{n}  \cdots |T_{j_{n}} \vec{f}| ^\frac{p}{n} \, dx \\ & \lesssim \sum_{j_1\leq j_2 \leq \cdots \leq j_{n}} \inrd  |T_{j_1} \vec{f}| ^\frac{p}{n}  \cdots |T_{j_{n}} \vec{f}| ^\frac{p}{n} \, dx  \\ & \leq
\sum_{j_1\leq j_2 \leq \cdots \leq j_{n}} \left(\inrd |T_{j_1} \vec{f}| ^\frac{p}{2} |T_{j_2} \vec{f}| ^\frac{p}{2} \, dx \right) ^\frac{1}{n} \left(\inrd |T_{j_2} \vec{f}| ^\frac{p}{2} |T_{j_3} \vec{f}| ^\frac{p}{2} \, dx  \right) ^\frac{1}{n} \ldots \left(\inrd |T_{j_{n}} \vec{f}| ^\frac{p}{2} |T_{j_1} \vec{f}| ^\frac{p}{2} \, dx \right) ^\frac{1}{n} \\ & \leq C \sum_{j_1\leq j_2 \leq \cdots \leq j_{n}} c^{\frac{j_2 - j_1}{n}} c^{\frac{j_3 - j_2}{n}} \cdots c^{\frac{j_{n} - j_1}{n}} \|\Delta_{j_1} \vec{f} \|_{L^p} ^ \frac{p}{n} \cdots \|\Delta_{j_{n}} \vec{f} \|_{L^p} ^ \frac{p}{n}. \end{align*}  Using H\"{o}lder's inequality one more time and the previous Lemma, we get that \begin{align*}\inrd & |T\vec{f} |^p \, dx  \\ & \leq C \left(\sum_{j_1\leq j_2 \leq \cdots \leq j_{n}} c^{\frac{j_2 - j_1}{n}} c^{\frac{j_3 - j_2}{n}} \cdots c^{\frac{j_{n} - j_1}{n}} \|\Delta_{j_1} \vec{f} \|_{L^p} ^ p \right)^\frac{1}{n} \\ & \times \cdots \times   \left(\sum_{j_1\leq j_2 \leq \cdots \leq j_{n}} c^{\frac{j_2 - j_1}{n}} c^{\frac{j_3 - j_2}{n}} \cdots c^{\frac{j_{n} - j_1}{n}} \|\Delta_{j_{n}} \vec{f} \|_{L^p} ^ p \right)^\frac{1}{n} \\ & \lesssim \frac{C}{(1 - c)^{\lceil p\rceil}} \sum_{j = 1}^\infty \|\Delta_j \vec{f}\|_{L^p} ^p  \\ & \lesssim \frac{C}{(1 - c)^{\lceil p\rceil}}  \|\vec{f}\|_{L^p} ^p.  \end{align*} \end{proof}

Now we will describe our precise stopping time.   For any cube $I \in \D$, let $\J(I)$ be the collection of maximal $J \in \D(I)$ such that \begin{equation}      \|V_J V_I^{-1}\|^p  > \lambda_1 \    \text{   or   }  \  \|V_J^{-1} V_I\|^{p'}  > \lambda_2 \label{STDef} \end{equation} for some $\lambda_1, \lambda_2 > 1$ to be specified later.  Also, clearly $J \in \F(J)$ for any $J \in \D(I)$ so that $\J$ is an admissible stopping time.

We will now show that $\J$ is decaying.  \begin{lemma} \label{DSTLem}
Let $1 < p < \infty$ and let $W$ be a matrix A${}_p$ weight.  For $\lambda_1, \lambda_2 > 1$ large enough, we have that $|\bigcup \J ^j (I)| \leq 2^{-j} |I|$ for every $I \in \D$.  \end{lemma}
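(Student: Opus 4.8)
The plan is to reduce the lemma to the single-step Carleson estimate $\sup_{I\in\D}\frac{1}{|I|}\sum_{J\in\J(I)}|J|\le\frac{1}{2}$. Granting this, the iteration observation recorded when decaying stopping times were introduced gives $\frac{1}{|I|}\sum_{J\in\J^{j}(I)}|J|\le2^{-j}$ for all $I\in\D$ and $j\in\N$; the cubes in $\J^{j}(I)$ are pairwise disjoint (by induction on $j$, since distinct maximal dyadic cubes are disjoint), so $\bigl|\bigcup\J^{j}(I)\bigr|=\sum_{J\in\J^{j}(I)}|J|\le2^{-j}|I|$, which is the claim.

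For the single-step estimate I would split $\J(I)\subseteq\J_{1}(I)\cup\J_{2}(I)$, where $\J_{1}(I)$ is the set of maximal $J\in\D(I)$ with $\|V_{J}V_{I}^{-1}\|^{p}>\lambda_{1}$ and $\J_{2}(I)$ the set of maximal $J\in\D(I)$ with $\|V_{J}^{-1}V_{I}\|^{p'}>\lambda_{2}$; since $\J(I)$ consists of disjoint cubes it suffices to show $\frac{1}{|I|}\sum_{J\in\J_{i}(I)}|J|\le\frac{1}{4}$ for $i=1,2$. The heart of the matter is converting the matrix stopping condition for $\J_{1}(I)$ into a scalar Carleson condition for $g(x):=\|W^{1/p}(x)V_{I}^{-1}\|^{p}$. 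Given $J\in\J_{1}(I)$, pick a unit vector $\vec u$ with $|V_{J}V_{I}^{-1}\vec u|=\|V_{J}V_{I}^{-1}\|$ and set $\vec v=V_{I}^{-1}\vec u$; then pointwise $|W^{1/p}(x)\vec v|\le\|W^{1/p}(x)V_{I}^{-1}\|\,|V_{I}\vec v|=\|W^{1/p}(x)V_{I}^{-1}\|$, while $\frac{1}{|J|}\int_{J}|W^{1/p}(x)\vec v|^{p}\,dx\approx|V_{J}\vec v|^{p}=\|V_{J}V_{I}^{-1}\|^{p}>\lambda_{1}$, so $\frac{1}{|J|}\int_{J}g\,dx\gtrsim\lambda_{1}$. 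On the other hand, fixing an orthonormal basis $\{\vec e_{k}\}_{k=1}^{n}$ of $\C$ and combining the elementary finite-dimensional bound $\|A\|^{p}\lesssim_{n,p}\sum_{k}|A\vec e_{k}|^{p}$ with $\frac{1}{|I|}\int_{I}|W^{1/p}(x)\vec e|^{p}\,dx\approx|V_{I}\vec e|^{p}$ shows $\frac{1}{|I|}\int_{I}g\,dx\lesssim_{n,p}\sum_{k}|V_{I}V_{I}^{-1}\vec e_{k}|^{p}=n$. Since the $J\in\J_{1}(I)$ are pairwise disjoint dyadic subcubes of $I$, we get $\lambda_{1}\sum_{J\in\J_{1}(I)}|J|\lesssim\sum_{J}\int_{J}g\le\int_{I}g\lesssim_{n,p}n|I|$, hence $\frac{1}{|I|}\sum_{J\in\J_{1}(I)}|J|\le\frac{1}{4}$ as soon as $\lambda_{1}$ is chosen large enough in terms of $n$ and $p$.

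The cubes in $\J_{2}(I)$ are handled by the symmetric argument, run with $W^{-1/p}$, $p'$ and $V_{I}$ in place of $W^{1/p}$, $p$ and $V_{I}^{-1}$. Here one uses, besides the matrix A${}_p$ bound $\|V_{J}V_{J}'\|\lesssim\|W\|_{\text{A}_{p}}$, the fact (a one-line consequence of H\"older's inequality) that $\|(V_{J}V_{J}')^{-1}\|\lesssim1$, so $V_{J}^{-1}$ and $V_{J}'$ are comparable with constants controlled by $\|W\|_{\text{A}_{p}}$; together with $\frac{1}{|J|}\int_{J}|W^{-1/p}(x)\vec e|^{p'}\,dx\approx|V_{J}'\vec e|^{p'}$ this yields $\frac{1}{|J|}\int_{J}g_{2}\,dx\gtrsim\lambda_{2}$ for $J\in\J_{2}(I)$ and $\frac{1}{|I|}\int_{I}g_{2}\,dx\lesssim_{n,p,\|W\|_{\text{A}_{p}}}1$, where $g_{2}(x):=\|W^{-1/p}(x)V_{I}\|^{p'}$, and disjointness then forces $\frac{1}{|I|}\sum_{J\in\J_{2}(I)}|J|\le\frac{1}{4}$ once $\lambda_{2}$ is large enough in terms of $n$, $p$ and $\|W\|_{\text{A}_{p}}$. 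Adding the two bounds gives the single-step estimate and hence the lemma. The step I expect to be the main obstacle is the passage from the matrix-norm stopping inequalities to the scalar Carleson integral bounds, together with the uniform estimate $\frac{1}{|I|}\int_{I}\|W^{1/p}(x)V_{I}^{-1}\|^{p}\,dx\lesssim_{n,p}1$; this is precisely where one must exploit that the $V_{I}$ are, up to constants, the $L^{p}$ reducing matrices of $W$, and where finite-dimensionality enters through summing over an orthonormal basis. Everything after that is the standard Carleson summation over disjoint dyadic cubes.
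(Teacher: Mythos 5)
Your proposal is correct and follows essentially the same route as the paper: reduce to the single-step Carleson estimate, split the stopping cubes according to which of the two inequalities in \eqref{STDef} triggers, and bound each family by integrating $\|W^{1/p}(x)V_I^{-1}\|^p$ (resp.\ $\|W^{-1/p}(x)V_I\|^{p'}$) over the disjoint stopping cubes, with the lower bound on each $J$ coming from the defining property of the reducing matrices (plus the A${}_p$/H\"older comparison of $V_J^{-1}$ with $V_J'$ in the second case) and the upper bound over $I$ from summing over an orthonormal basis. You have simply written out in detail the steps the paper compresses into ``by maximality and elementary linear algebra'' and ``by the matrix A${}_p$ condition,'' including the same dependence of $\lambda_2$ on $\|W\|_{\text{A}_p}$.
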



\begin{proof} By iteration, it is enough to prove the lemma for $j = 1$.  For $I \in \D$, let $\G(I)$ denote the collection of maximal $J \in \D(I)$ such that the first inequality (but not necessarily the second inequality) in $(\ref{STDef})$ holds.  Then by maximality and elementary linear algebra, we have that \begin{equation*}  \left| \bigcup_{J \in \G(I)} J \right| = \sum_{J \in \G(I)} |J| \lesssim \frac{1}{\lambda_1} \sum_{J \in \G(I)} \int_{J} \|W^\frac{1}{p} (y) V_I ^{-1}\|^p \, dy \leq  \frac{C_1 |I|}{\lambda_1} \end{equation*} for some $C_1 > 0$ only depending on $n$ and $d$.

On the other hand, let For $I \in \D$, let $\widetilde{\G}(I)$ denote the collection of maximal $J \in \D(I)$ such that the second inequality (but not necessarily the first inequality) in $(\ref{STDef})$ holds. Then by the matrix A${}_p$ condition we have  \begin{equation*}  \left| \bigcup_{J \in \widetilde{\G}(I)} J \right|   \leq  \frac{C_2}{\lambda_2}  \sum_{J \in \widetilde{\G}(I)} \int_J \|W^{-\frac{1}{p}} (y) V_I \|^{p'} \, dy  \leq  \frac{C_2 '\|W\|_{A_p} ^\frac{p'}{p}}{\lambda_2} |I| \end{equation*} for some $C_2 '$ only depending on $n$ and $d$.  The proof is now completed by setting $\lambda_1 = 4C_1$ and $\lambda_2 = 4 C_2'  \|W\|_{A_p} ^\frac{p'}{p}$.   \end{proof}

While we will not have a need to discuss matrix A${}_{p, \infty}$ weights in detail in this paper, note that in fact Lemma $3.1$ in \cite{V} immediately gives us that Lemma \ref{DSTLem} holds for matrix A${}_{p, \infty}$ weights (with possibly larger $\lambda_2$ of course.)

Let $\MC{F}$ be the vector space of all $\vec{f} \in \F$.  Now define the constant Haar multiplier $M_{W, p}  $  by \begin{equation*} M_{W, p} \vec{f} := \sum_{I \in \D} \sum_{\varepsilon \in \S}  V_I  {\vec{f}}_I ^\varepsilon h_I ^\varepsilon. \nonumber \end{equation*}  which obviously is defined on $\MC{F}$ and define $M_{W, p} ^{-1}$ on $\MC{F}$ in the obvious way.   We can now state and prove the main result of this section.

\begin{lemma} \label{WtdHaarMultThm} Let $1 < p < \infty$.  If $W$ is a matrix A${}_p$ weight, then $W^\frac{1}{p} M_{W, p} ^{-1} $  satisfies the bounds \begin{equation*} \|W^\frac{1}{p} M_{W, p} ^{-1} \vec{f}\|_{L^p} ^p \lesssim \|W\|_{\text{A}_p} ^{ 1 + {\lceil p \rceil}} \|\vec{f}\|_{L^p} ^p \end{equation*}  for all $\vec{f}$ with finite Haar expansion \end{lemma}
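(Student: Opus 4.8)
The plan is to apply the abstract Lemma of this section (the one whose conclusion is $\|T\vec f\|_{L^p}^p\lesssim\frac{C}{(1-c)^{\lceil p\rceil}}\|\vec f\|_{L^p}^p$) to the operator $T:=W^{\frac1p}M_{W,p}^{-1}$ with $T_j:=T\Delta_j$, relative to the stopping time $\J$ of \eqref{STDef}, which is admissible and, by Lemma~\ref{DSTLem}, decaying with rate $\frac12$. That Lemma reduces the claim to a bilinear estimate
\[
\inrd |T_k\vec f|^{\frac p2}\,|T_j\vec f|^{\frac p2}\,dx \le C\,c^{|k-j|}\,\|\Delta_j\vec f\|_{L^p}^{\frac p2}\,\|\Delta_k\vec f\|_{L^p}^{\frac p2},
\]
and since its output is $\lesssim\frac{C}{(1-c)^{\lceil p\rceil}}\|\vec f\|_{L^p}^p$ (the dependence on the decay rate $\frac12$ being a harmless fixed power of $2$), to reach the exponent $1+\lceil p\rceil$ it suffices to produce such an estimate with $C\approx\|W\|_{\text{A}_p}$ and $1-c\gtrsim\|W\|_{\text{A}_p}^{-1}$.

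First I would establish the single-scale bound $\|T_j\vec f\|_{L^p}^p\lesssim\|W\|_{\text{A}_p}\|\Delta_j\vec f\|_{L^p}^p$. The cubes of $\J^{j-1}$ are pairwise disjoint, and on $Q\in\J^{j-1}$ one has $\Delta_j\vec f=\sum_{I\in\F(Q)}\sum_{\varepsilon\in\S}\vec f_I^\varepsilon h_I^\varepsilon=:\Delta_Q\vec f$, hence $T_j\vec f=W^{\frac1p}V_Q^{-1}G_Q$ on $Q$, where $G_Q:=\sum_{I\in\F(Q)}\sum_{\varepsilon\in\S}(V_QV_I^{-1})\vec f_I^\varepsilon h_I^\varepsilon$. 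Since $V_Q,V_I$ are positive definite, $\|V_QV_I^{-1}\|=\|V_I^{-1}V_Q\|$, and for $I\in\F(Q)$ the failure of the stopping condition \eqref{STDef}, together with $\lambda_2\approx\|W\|_{\text{A}_p}^{p'/p}$, gives $\|V_I^{-1}V_Q\|\le\lambda_2^{1/p'}\lesssim\|W\|_{\text{A}_p}^{1/p}$; thus $G_Q$ is a Haar multiplier of $\Delta_Q\vec f$ with uniformly bounded matrix symbol, and the ($\C$-valued, unweighted) $L^p$ boundedness of Haar multipliers yields $\|G_Q\|_{L^p(Q)}\lesssim\|W\|_{\text{A}_p}^{1/p}\|\Delta_Q\vec f\|_{L^p(Q)}$. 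The crux is the ``core estimate''
\[
\int_Q |W^{\frac1p}(x)V_Q^{-1}G_Q(x)|^p\,dx \lesssim \int_Q |G_Q(x)|^p\,dx
\]
with implied constant depending only on $\lambda_1$ (hence only on $n$ and $d$): split $Q=(Q\setminus\bigcup\J(Q))\cup\bigcup_{J'\in\J(Q)}J'$; on $Q\setminus\bigcup\J(Q)$ every dyadic cube between a Lebesgue point $x$ and $Q$ lies in $\F(Q)$, so $\|W^{\frac1p}(x)V_Q^{-1}\|\lesssim\lim_{I\downarrow x}\|V_IV_Q^{-1}\|\le\lambda_1^{1/p}$; on each $J'\in\J(Q)$ the function $G_Q$ is constant, say $G_Q|_{J'}\equiv\vec b_{J'}$, and writing $\widehat{J'}\in\F(Q)$ for the dyadic parent of $J'$ (so $\|V_{\widehat{J'}}V_Q^{-1}\|\le\lambda_1^{1/p}$),
\[
\int_{J'} |W^{\frac1p}V_Q^{-1}\vec b_{J'}|^p \le \int_{\widehat{J'}} |W^{\frac1p}V_Q^{-1}\vec b_{J'}|^p \approx |\widehat{J'}|\,|V_{\widehat{J'}}V_Q^{-1}\vec b_{J'}|^p \lesssim \lambda_1|J'|\,|\vec b_{J'}|^p \le \lambda_1\int_{J'}|G_Q|^p
\]
by Jensen's inequality. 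Adding these pieces gives the core estimate, and summing over $Q\in\J^{j-1}$ (using $\|\Delta_j\vec f\|_{L^p}^p=\sum_Q\|\Delta_Q\vec f\|_{L^p(Q)}^p$) gives the single-scale bound.

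For the bilinear estimate, let $k\ge j$. Then $T_k\vec f$ is supported in $\bigcup\J^{k-1}$, which inside each $Q\in\J^{j-1}$ is contained in $\bigcup\J(Q)$ (for $k>j$), and there $T_j\vec f=W^{\frac1p}\vec v_{J'}$ with $\vec v_{J'}:=V_Q^{-1}\vec b_{J'}$ constant on $J'\in\J(Q)$. By the Cauchy--Schwarz inequality, $\inrd|T_k\vec f|^{\frac p2}|T_j\vec f|^{\frac p2}\,dx\le\|T_j\vec f\|_{L^p(\bigcup\J^{k-1})}^{\frac p2}\,\|T_k\vec f\|_{L^p}^{\frac p2}$, and $\|T_k\vec f\|_{L^p}^p\lesssim\|W\|_{\text{A}_p}\|\Delta_k\vec f\|_{L^p}^p$ by the single-scale bound. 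For the remaining factor I use that, for fixed $\vec v$, the scalar weight $x\mapsto|W^{\frac1p}(x)\vec v|^p$ is an $\text{A}_p$ (hence $\text{A}_\infty$) weight with characteristic $\lesssim\|W\|_{\text{A}_p}$, so it obeys a reverse H\"older inequality with exponent $1+\gamma$ and constant $\lesssim 1$, where $\gamma\gtrsim\|W\|_{\text{A}_p}^{-1}$; since $|J'\cap\bigcup\J^{k-1}|\le 2^{-(k-1-j)}|J'|$ by Lemma~\ref{DSTLem} applied inside $J'$, H\"older then gives $\int_{J'\cap\bigcup\J^{k-1}}|W^{\frac1p}\vec v_{J'}|^p\lesssim 2^{-\gamma(k-1-j)}\int_{J'}|W^{\frac1p}\vec v_{J'}|^p\lesssim 2^{-\gamma(k-1-j)}\lambda_1\int_{J'}|G_Q|^p$, the last step as in the core estimate. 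Summing over $J'\in\J(Q)$ and over $Q\in\J^{j-1}$ yields $\|T_j\vec f\|_{L^p(\bigcup\J^{k-1})}^p\lesssim 2^{-\gamma(k-j)}\|W\|_{\text{A}_p}\|\Delta_j\vec f\|_{L^p}^p$ (the case $k=j$ being the single-scale bound), which is the required bilinear estimate with $C\approx\|W\|_{\text{A}_p}$ and $c:=2^{-\gamma}$, so $1-c\gtrsim\gamma\gtrsim\|W\|_{\text{A}_p}^{-1}$.

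Feeding these into the abstract Lemma gives $\|W^{\frac1p}M_{W,p}^{-1}\vec f\|_{L^p}^p\lesssim\frac{C}{(1-c)^{\lceil p\rceil}}\|\vec f\|_{L^p}^p\approx\|W\|_{\text{A}_p}^{1+\lceil p\rceil}\|\vec f\|_{L^p}^p$, which is the claim. I expect the main obstacle to be the core estimate: the pointwise multiplier $W^{\frac1p}V_Q^{-1}$ is \emph{not} bounded on $L^p$, and the computation above only works because $G_Q$ is a martingale adapted to the stopping structure, so on a stopped cube one may trade $V_Q^{-1}$ for $V_{\widehat{J'}}^{-1}$ with $\widehat{J'}\in\F(Q)$ (where $\|V_{\widehat{J'}}V_Q^{-1}\|$ is controlled by $\lambda_1$) at the cost of only a $2^d$ from Jensen, while on the unstopped part $\|W^{\frac1p}(x)V_Q^{-1}\|$ is controlled a.e. The secondary delicate point --- which is exactly what produces the $\lceil p\rceil$-th power of $\|W\|_{\text{A}_p}$ --- is that extracting geometric decay in $|k-j|$ forces the use of the quantitative reverse H\"older/$\text{A}_\infty$ property of the scalar weights $|W^{\frac1p}(\cdot)\vec v|^p$, whose exponent degrades like $\|W\|_{\text{A}_p}^{-1}$.
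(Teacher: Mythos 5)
Your proposal is correct and follows essentially the same route as the paper: the same stopping time \eqref{STDef}, the same reduction to the abstract almost-orthogonality lemma, the same single-scale bound obtained by splitting each $Q$ into its unstopped part (pointwise control of $\|W^{1/p}(x)V_Q^{-1}\|$) and its stopping children (passing to the parent in $\F(Q)$), and the same quantitative reverse H\"older argument producing $1-c\gtrsim\|W\|_{\text{A}_p}^{-1}$ and hence the exponent $1+\lceil p\rceil$. Your ``core estimate'' is just a cleanly isolated form of the paper's $(A)+(B)$ decomposition, so there is nothing to flag.
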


\begin{proof} Fix some $I_0 \in \D$.   Obviously it is enough to prove that the operator $T$ defined on $\F$ by \begin{align} T\vec{f} := \sum_{I \in \D(I_0)} \sum_{\varepsilon \in \S} W^\frac{1}{p} V_I ^{-1} \vec{f}_I ^\varepsilon h_I ^\varepsilon \nonumber \end{align} satisfies the above bound independent of $I_0 \in \D$.  Note that we also clearly have $T \vec{f} = \sum_{j = 1}^\infty T_j \vec{f}$  for $\vec{f} \in \MC{F}$.  Now fix some $\vec{f} \in \MC{F}$.

 For each $I \in \D$, let \begin{align}{M_I} \vec{f} := \sum_{J \in \F(I)} \sum_{\varepsilon \in \S}  V_J ^{-1} \vec{f}_J ^\varepsilon h_J ^\varepsilon \nonumber \end{align} so that \begin{align} T_j \vec{f} = \sum_{I \in \J^{j - 1}} W^\frac{1}{p} {M_I} \vec{f}. \nonumber \end{align}   Since $V_I M_I$ is a constant Haar multiplier and since $\|V_I V_J ^{-1}\| ^p \lesssim  \|W\|_{\text{A}_p}$ if $J \in \F(I)$, we immediately have that

 \begin{equation} \|V_I {M_I} \vec{f}\|_{L^p} ^p  \lesssim \|W\|_{A_p} \|\vec{f}\|_{L^p}^p. \nonumber \end{equation}


Now we will show that each $T_j$ is bounded.  To that end, we have that \begin{align}  \int_{\Rd} |T_j f|^p \, dx & = \int_{\bigcup_{\J^{j - 1}}  \backslash \bigcup_{\J^j}}  |T_j f|^p \, dx +  \int_{\bigcup_{\J^j}} |T_j f|^p \, dx \nonumber \\ & := (A) + (B). \nonumber \end{align} Since $\| W^\frac{1}{p} (x) V_J ^{-1}\| ^p \lesssim 1$ on $J \backslash \bigcup \J(J)$, we can estimate $(A)$ first as follows: \begin{align} (A) & = \sum_{J \in \J^{j - 1}} \int_{J \backslash \bigcup \J(J)} |T_j \vec{f}|^p \, dx \nonumber \\ & = \sum_{J \in \J^{j - 1}} \int_{J \backslash \bigcup \J(J)} |W^\frac{1}{p} (x) M_J \vec{f} (x)|^p \, dx \nonumber \\ & \leq \sum_{J \in \J^{j - 1}} \int_{J \backslash \bigcup \J(J)} \| W^\frac{1}{p} (x) V_J ^{-1}\| ^p |V_J  M_J  \vec{f} (x)|^p \, dx  \nonumber \\ & \lesssim  \sum_{J \in \J^{j - 1}} \int_{J } |V_J  M_J  \vec{f} |^p\, dx \nonumber \\  &   \lesssim \|W\|_{A_p} \|\vec{f}\|_{L^p} ^p. \nonumber \end{align}  As for $(B)$, note that $M_J \vec{f}$ is constant on $I \in \J(J)$, and so we will refer to this constant by $M_J \vec{f}(I)$.  We then estimate $(B)$ as follows:
\begin{align} (B) & = \int_{\bigcup \J^j} |T_j \vec{f}|^p \, dx \nonumber \\ & \leq \sum_{J \in \J^{j - 1}} \sum_{I \in \J(J)} \int_I |W^\frac{1}{p} (x) M_J \vec{f}|^p \, dx \nonumber \\ &  \leq \sum_{J \in \J^{j - 1}} \sum_{I \in \J(J)} |I| | V_J  M_J \vec{f}(I) | \left (\frac{1}{|I|} \int_I \|W^\frac{1}{p} (x) V_J ^{-1} \|^p \, dx \right)  \nonumber \\ & \lesssim \sum_{J \in \J^{j - 1}} \sum_{I \in \J(J)} |I| | V_J  M_J \vec{f}(I) | \nonumber \\ & = \sum_{J \in \J^{j - 1}} \sum_{I \in \J(J)}  \int_I | V_J  M_J \vec{f}(x) |^p  \, dx \nonumber \\ &  \lesssim \|W\|_{A_p}  \|\vec{f}\|_{L^p} ^p.  \label{WtdHaarMultThm3} \end{align}  However, clearly $T_j \vec{f} = T_j \Delta_j \vec{f}$ so that \begin{equation*} \int_{\Rd} |T_j f|^p \, dx \lesssim \|W\|_{\text{A}_p} \|\Delta_j \vec{f}\|_{L^p} ^p  \end{equation*}

To finish the proof, we claim that there exists $0 < c <  1$ such that \begin{align} \int_{\bigcup \J^{k - 1} } |T_j \vec{f}|^p \, dx  \lesssim c^{k - j} \|\vec{f}_j\|_{L^p} ^p \nonumber \end{align} whenever $k > j$.  If we define $M_j \vec{f}$ as \begin{align} M_j \vec{f} : = \sum_{I \in \J^{j - 1}} M_I \vec{f}, \nonumber \end{align} then $M_j \vec{f}$ is constant on $J \in \J^j$. Thus, we have that \begin{align} \int_{\bigcup \J^{k - 1} } |T_j \vec{f}|^p \, dx & = \sum_{J \in \J^j} \sum_{I \in \J^{k - j - 1} (J)} \int_I |W^\frac{1}{p} (x) M_j\vec{f} (J)|^p \, dx \nonumber \\ & \leq \sum_{J \in \J^j} \sum_{I \in \J^{k - j - 1} (J)} |J||V_J M_j \vec{f} (J)  | ^p \left(\frac{1}{|J|} \int_I \| W^\frac{1}{p} (x)  V_J ^{-1} \|^p \,dx \right). \nonumber \end{align} However, \begin{align}   |J| |V_J M_j \vec{f} (J)  | ^p  & \lesssim \int_J | W^\frac{1}{p} (x) M_j \vec{f} (J)|^p \, dx \nonumber \\ & = \int_J |T_j\vec{f}(x)|^p\,dx.  \label{WtdHaarMultThm4} \end{align} On the other hand, it is not hard to show that $|W ^\frac{1}{p} (x) \vec{e}|^p$ is a scalar A${}_p$ weight for any $\vec{e} \in \C$ (see \cite{G}), which by the classical reverse H\"{o}lder inequality means that we can pick some $q > p$ and use H\"{o}lder's inequality in conjunction with Lemma $\ref{DSTLem}$ to get \begin{align} \frac{1}{|J|} \sum_{I \in \J^{k - j - 1} (J)} \int_I & \| W^\frac{1}{p} (x)  V_J ^{-1} \|^p \,dx  \nonumber \\ & = \frac{1}{|J|}  \int_{\bigcup \J^{k - j - 1}(J)} \|W^\frac{1}{p} (x)  V_J ^{-1} \|^p \,dx \nonumber \\ & \leq \frac{1}{|J|} \left(\int_{\bigcup \J^{k - j - 1}(J)} \| W^\frac{1}{p} (x) V_J ^{-1} \|^q \, dx \right)^\frac{p}{q} (2 ^{-(k - j - 1)} |J|) ^{1 - \frac{p}{q}} \nonumber \\ & \lesssim 2 ^{-(k - j - 1)(1 - \frac{p}{q})} \left(\frac{1}{|J|} \int_J \|W^\frac{1}{p} (x) V_J ^{-1} \|^q \, dx \right)^\frac{p}{q} \nonumber \\ & \lesssim  2 ^{-(k - j - 1)(1 - \frac{p}{q})}. \label{WtdHaarMultThm5} \end{align}  Combining $(\ref{WtdHaarMultThm4})$ with $(\ref{WtdHaarMultThm5})$, we get that \begin{align} \int_{\bigcup \J^{k - 1} } |T_j \vec{f}|^p \, dx & \lesssim 2 ^{-(k - j - 1)(1 - \frac{p}{q})}   \int_{\bigcup_{\J^j}} |T_j \vec{f}  |^p \, dx \nonumber \\ & \lesssim \|W\|_{A_p}   2^{-(k - j - 1)(1 - \frac{p}{q})} \|\Delta_j \vec{f}\|_{L^p} ^p. \nonumber \end{align}   Note however, that the scalar characteristic of each $\|W\|  | \vec{e}|$ is less than or equal to $\|W\|_{A_p} |\vec{e}|$ which means that classically we can choose $q$ such that \begin{equation*} q = p + \frac{1}{C\|W\|_{\text{A}_p} } \end{equation*} for some $C > 1$ independent of $W$, so that \begin{equation*} 2^{- (1 - \frac{p}{q})} = 2^{- \frac{1}{q C \|W\|_{\text{A}_p}}} = 2^{- \frac{1}{p C  \|W\|_{\text{A}_p} + 1 }} \end{equation*}  which means that \begin{equation*} \int_{\bigcup \J^{k - 1} } |T_j \vec{f}|^p \, dx \lesssim \|W\|_{\text{A}_p} 2^{- \frac{k - j - 1}{p C  \|W\|_{\text{A}_p} + 1 }}\|\Delta_j \vec{f}\|_{L^p} ^p \end{equation*} since again $T_j \vec{f} = T_j \Delta_j \vec{f}$.

  Finally,  this gives us that \begin{align*} \inrd |T_k \vec{f}| ^\frac{p}{2} \, |T_j \vec{f}|^\frac{p}{2} \, dx & \leq \left(\int_{\bigcup \J^{k - 1} } |T_j \vec{f}| ^p \, dx \right)^\frac{1}{2} \|T_k \vec{f}\|_{L^p } ^\frac{p}{2}  \\ & \lesssim  (\|W\|_{\text{A}_p} 2^{- \frac{k - j - 1}{p C  \|W\|_{\text{A}_p} + 1 }}\|\Delta_k \vec{f}\|_{L^p} ^p)^\frac{1}{2}  (\|W\|_{\text{A}_p} \|\Delta_j \vec{f}\|_{L^p} ^p)^\frac12 \end{align*} and combining this with the previous two lemmas gives us that

  \begin{align*} \|W^\frac{1}{p} M_{W, p} ^{-1} \vec{f} \|_{L^p} ^p & \lesssim \|W\|_{\text{A}_p} \left(\frac{1}{1 - 2^{-\frac{1}{2 pC \|W\|_{\text{A}_p}}}}\right)^{\lceil p \rceil} \|\vec{f}\|_{L^p} ^p   \\ & \lesssim     \|\vec{f}\|_{L^p} ^p \|W\|_{\text{A}_p} ^{ 1 + {\lceil p \rceil}} \end{align*}

\end{proof}

Note that the proof of Lemma \ref{WtdHaarMultThm} only requires Lemma \ref{DSTLem} and the fact that $|W^\frac{1}{p} (x) \vec{e} |^p$ satisfies a reverse H\"{o}lder inequality for each $\vec{e}$. In particular, our proof (as do proofs in \cite{NT, V}) holds for matrix A${}_{p, \infty}$ weights.

   \section{Proof of Theorem \ref{MainThm} and comments}

   We will first prove Theorem \ref{MainThm} for $C_c^\infty(\Rd)$ functions, which largely involves Lemma \ref{WtdHaarMultThm} in conjunction with some easy duality arguments.   Theorem \ref{MainThm} in general will then follow from the fact that $C_c^\infty(\Rd)$ is dense in $L^p(W)$ (see Proposition $3.7$ in \cite{CMR}, which interestingly holds for not necessarily matrix A${}_p$ weights) and some elementary approximation arguments. For the rest of this section, pick strictly increasing finite subsets $\{F_k\}_{k = 1}^\infty $ of $\D$ whose union is all of $\D$. \\

\noindent \textit{Proof of  Theorem \ref{MainThm}}.  We first prove the first inequality in Theorem \ref{MainThm} for $C_c^\infty(\Rd)$ functions.  Let $\vec{f} \in C_c^\infty(\Rd)$ and let $\vec{f}_k$ be defined by \begin{equation*} \vec{f}_k = \sum_{I \in F_k} \sum_{\varepsilon \in \S} \vec{f}_I ^\varepsilon h_I ^\varepsilon \end{equation*} so that obviously $M_{W, p} \vec{f}_k$ is well defined. Also note that obviously we have $\F \subseteq L^p(W).$  Then from Lemma \ref{WtdHaarMultThm} we have that \begin{align*} \|\vec{f}_k \|_{L^p (W)} ^p & = \|W^\frac{1}{p} \vec{f}_k\|_{L^p} ^p \\ & = \|(W^\frac{1}{p} M_{W, p} ^{-1} ) M_{W, p}  \vec{f}_k \|_{L^p} ^p \\ & \lesssim \|W\|_{\text{A}_p} ^{ 1 + {\lceil p \rceil}}  \|M_{W, p}  \vec{f}_k\| _{L^p} ^p \\ & \lesssim  \|W\|_{\text{A}_p} ^{ 1 + {\lceil p \rceil}} \inrd \left(\sum_{I \in \D} \sum_{\varepsilon \in \S} \frac{|V_I \vec{f}_I ^\varepsilon|^2 }{|I|} \chi_I (x) \right)^\frac{p}{2} \, dx \end{align*} by standard dyadic Littlewood-Paley theory. However, since $\vec{f} \in C_c^\infty(\Rd)$ we have that $\vec{f}_k \rightarrow \vec{f}$ pointwise, so finally by Fatou's lemma

   \begin{align*} \|\vec{f}\|_{L^p(W)} ^p & \leq \liminf_{k \rightarrow \infty} \|\vec{f}_k \|_{L^p(W)} ^p \\ & \lesssim  \|W\|_{\text{A}_p} ^{ 1 + {\lceil p \rceil}} \inrd \left(\sum_{I \in \D} \sum_{\varepsilon \in \S} \frac{|V_I \vec{f}_I ^\varepsilon|^2 }{|I|} \chi_I (x) \right)^\frac{p}{2} \, dx. \end{align*}

 Now we prove the second inequality in Theorem \ref{MainThm} for $C_c^\infty(\Rd)$ functions.  If $\vec{f} \in C_c^\infty(\Rd) $ then  let $\vec{\Psi}_k$ be defined by \begin{equation*} \vec{\Psi}_k = \sum_{I \in F_k} \sum_{\varepsilon \in \S} V_I ^{-1}  \vec{f}_I ^\varepsilon h_I ^\varepsilon. \end{equation*} Also for $\vec{g} \in \F$, define $\vec{g}_k$ as before and let $F_{\vec{g}}$ be the class of $I \in \D$ such that the Haar coefficient of $\vec{g}$ with respect to $I$ is not zero. Then clearly \begin{align*} | \langle \vec{\Psi}_k , \vec{g}\rangle_{L^2}| & = \left| \sum_{I \in F_k \cap F_{\vec{g}}} \sum_{\varepsilon \in \S} \langle V_I ^{-1} \vec{f} _I ^\varepsilon , \vec{g}_{I} ^\varepsilon \rangle_{\C} \right| \\ & = \left| \sum_{I \in \D} \sum_{\varepsilon \in \S} \langle   \vec{f} _I ^\varepsilon , V_I ^{-1} (\vec{g}_k) _{I} ^\varepsilon \rangle_{\C} \right| \\ & \leq \inrd | \langle \vec{f} ,  M_{W, p} ^{-1} \vec{g}_k \rangle_{\C} | \, dx  \\ & = \inrd | \langle W^{-\frac{1}{p}} \vec{f} , W^{\frac{1}{p}}  M_{W, p} ^{-1}\vec{g}_k \rangle_{\C} | \, dx  \\ & \lesssim \|W\|_{\text{A}_p} ^{ \frac{1}{p} + \frac{\lceil p \rceil}{p}} \|W^{-\frac{1}{p}} \vec{f}\|_{L^{p'}} \|\vec{g}\|_{L^p}  \end{align*} By a trivial approximation argument we therefore have \begin{equation*} \sup_k \|\vec{\Psi} _k \|_{L^{p'}}  \lesssim \|W\|_{\text{A}_p} ^{ \frac{1}{p} + \frac{ \lceil p \rceil}{p}} \|W^{-\frac{1}{p}} \vec{f}\|_{L^{p'}}  \end{equation*} which by dyadic Littlewood-Paley theory and the monotone convergence theorem says

\begin{align*}  & \left(\inrd \left(\sum_{I \in \D} \sum_{\varepsilon \in \S} \frac{|V_I ^{-1} \vec{f} _I ^\varepsilon|^2 }{|I|} \chi_I (x) \right)^\frac{p'}{2} \, dx\right)^{\frac{1}{p'}} \\ & = \lim_{k \rightarrow \infty} \left(\inrd \left(\sum_{I \in F_k} \sum_{\varepsilon \in \S} \frac{|V_I ^{-1} \vec{f}_I ^\varepsilon|^2 }{|I|} \chi_I (x) \right)^\frac{p'}{2} \right)^{\frac{1}{p'}}\\ & \lesssim \sup_k  \|\vec{\Psi} _k \|_{L^{p'}}  \\ & \lesssim \|W\|_{\text{A}_p} ^{ \frac{1}{p} + \frac{{\lceil p \rceil}}{p}} \|W^{-\frac{1}{p}} \vec{f}\|_{L^{p'}} \end{align*} which means that  \begin{align*} & \left(\inrd \left(\sum_{I \in \D} \sum_{\varepsilon \in \S} \frac{|V_I '   \vec{f}_I ^\varepsilon |^2 }{|I|} \chi_I (x) \right)^{\frac{p'}{2}} \, dx\right)^{\frac{1}{p'}} \\ & \leq  \|W\|_{\text{A}_p} ^\frac{1}{p} \left(\inrd \left(\sum_{I \in \D} \sum_{\varepsilon \in \S} \frac{|V_I ^{-1}  \vec{f}_I ^\varepsilon|^2 }{|I|} \chi_I (x) \right)^\frac{p'}{2} \, dx \right)^{\frac{1}{p'}} \\ &  \lesssim \|W\|_{\text{A}_p} ^{ \frac{2}{p} + \frac{{\lceil p \rceil}}{p}} \|W^{-\frac{1}{p}}\vec{f}\|_{L^{p'}}  \end{align*} by the matrix A${}_p$ condition.  However,  $W$ is a matrix A${}_p$ weight if and only if $W^{1 - p'}$ is a matrix A${}_{p'}$ weight with $\|W^{1 - p'}\|_{\text{A}_{p'}} = \|W\|_{\text{A}_p} ^{\frac{1}{p - 1}}$.  Furthermore, one can easily check that  $V_I ' (W^{1 - p'}, p') =  V_I(W, p)$ which then means that \begin{align*} & \left(\inrd \left(\sum_{I \in \D} \sum_{\varepsilon \in \S} \frac{|V_I    \vec{f}_I ^\varepsilon |^2 }{|I|} \chi_I (x) \right)^{\frac{p}{2}} \, dx\right)^{\frac{1}{p}} \\ & \lesssim  \|W^{1 - p'}\|_{\text{A}_{p'}} ^{ \frac{2}{p'} + \frac{{\lceil p' \rceil}}{p' }} \|\vec{f}\|_{L^p(W)} \\ & = \|W\|_{\text{A}_{p}} ^{ \frac{2}{(p - 1)p'} + \frac{{\lceil p' \rceil}}{(p - 1)p' }} \|\vec{f}\|_{L^p(W)} \\ & = \|W\|_{\text{A}_{p}} ^{ \frac{2}{p} + \frac{{\lceil p' \rceil}}{p }} \|\vec{f}\|_{L^p(W)} \end{align*}which completes the proof for $C_c^\infty(\Rd)$ functions.

Finally we will complete the proof of Theorem \ref{MainThm} for $\vec{f} \in L^p(W)$.  Pick a sequence $\{\vec{f}_k\}_{k = 1} ^\infty \subseteq C_c^\infty(\Rd)$ where $\vec{f}_k \rightarrow f$ in $L^p(W)$.  Note that since $W^{1 - p'}$ is locally integrable we clearly have that $(\vec{f}_k)_I ^\varepsilon \rightarrow \vec{f}_I ^\varepsilon$ for any $I \in \D$ and $\varepsilon \in \S$.  Then by Theorem \ref{MainThm} for $ C_c ^\infty(\Rd)$ functions, Fatou's lemma, and the two facts stated above, we have \begin{align*}   \|W\|_{\text{A}_{p}} ^{ {2} + {\lceil p' \rceil}}  \|\vec{f}\|_{L^p(W)} ^p  & =  \|W\|_{\text{A}_{p}} ^{ {2} + {\lceil p' \rceil}} \liminf_{k \rightarrow \infty} \|\vec{f}_k\|_{L^p(W)} ^p  \\ & \gtrsim  \liminf_{k \rightarrow \infty} \inrd \left(\sum_{I \in \D} \sum_{\varepsilon \in \S} \frac{|V_I (\vec{f}_k)_I ^\varepsilon| ^2 }{|I|} \chi_I (x) \right)^\frac{p}{2} \, dx \\ & \geq   \inrd \left(\sum_{I \in \D} \sum_{\varepsilon \in \S} \frac{|V_I \vec{f}_I ^\varepsilon| ^2 }{|I|} \chi_I (x) \right)^\frac{p}{2} \, dx  \end{align*} which proves half of Theorem \ref{MainThm} for $\vec{f} \in L^p(W)$, and additionally proves that \begin{equation*} \lim_{k \rightarrow \infty} \inrd \left(\sum_{I \in \D} \sum_{\varepsilon \in \S} \frac{|V_I (\vec{f}_k - \vec{f})_I ^\varepsilon| ^2 }{|I|} \chi_I (x) \right)^\frac{p}{2} \, dx   = 0. \end{equation*} The other half now follows immediately since \begin{align*} \|\vec{f}\|_{L^p(W)} ^p  &  = \lim_{k \rightarrow \infty} \|\vec{f}_k \|_{L^p(W)} ^p \\ &  \lesssim \|W\|_{\text{A}_p} ^{ 1 + {\lceil p \rceil}}   \lim_{k \rightarrow \infty} \inrd \left(\sum_{I \in \D} \sum_{\varepsilon \in \S} \frac{|V_I (\vec{f}_k)_I ^\varepsilon| ^2 }{|I|} \chi_I (x) \right)^\frac{p}{2} \, dx  \\ & = \|W\|_{\text{A}_p} ^{ 1 + {\lceil p \rceil}}  \inrd \left(\sum_{I \in \D} \sum_{\varepsilon \in \S} \frac{|V_I \vec{f}_I ^\varepsilon| ^2 }{|I|} \chi_I (x) \right)^\frac{p}{2} \, dx \end{align*} \hfill $\Box$

Let us finish this paper with two comments.  As was earlier commented, Lemma \ref{WtdHaarMultThm} holds for matrix A${}_{p, \infty}$ weights.  Because of this, the attentive reader will notice that we have actually proved the following, which was also proved in \cite{NT, V} when $d = 1$.

\begin{theorem} \label{ApinfThm} If $W$ is a matrix A${}_{p, \infty}$ weight and $1 < p < \infty$ then \begin{equation*} \left(\inrd \left(\sum_{I \in \D} \sum_{\varepsilon \in \S} \frac{|V_I ^{-1}  \vec{f}_I ^\varepsilon|^2 }{|I|} \chi_I (x) \right)^\frac{p'}{2} \, dx\right)^{\frac{1}{p'}} \lesssim \|\vec{f}\|_{L^{p'} (W^{1 - p'})} \end{equation*}  \end{theorem}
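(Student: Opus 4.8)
The plan is to obtain Theorem \ref{ApinfThm} as a direct byproduct of the duality portion of the proof of Theorem \ref{MainThm}, together with the observation (already noted after the proof of Lemma \ref{WtdHaarMultThm}) that Lemma \ref{WtdHaarMultThm} and its ingredients---Lemma \ref{DSTLem} and the scalar reverse H\"older inequality for $|W^{1/p}(x)\vec e|^p$---remain valid for matrix A${}_{p,\infty}$ weights. First I would observe that the estimate in Lemma \ref{WtdHaarMultThm}, namely $\|W^{1/p} M_{W,p}^{-1}\vec f\|_{L^p} \lesssim \|\vec f\|_{L^p}$, holds for any matrix A${}_{p,\infty}$ weight $W$ (now with an implicit constant depending on the A${}_{p,\infty}$ characteristic rather than the A${}_p$ characteristic, which is irrelevant for a qualitative statement). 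The only place in the proof where the full matrix A${}_p$ condition was used rather than just A${}_{p,\infty}$ is in estimating the term $(B)$ and in bounding $\|V_I M_I \vec f\|_{L^p}$; but by Lemma $3.1$ of \cite{V}, Lemma \ref{DSTLem} holds for A${}_{p,\infty}$ weights, and the bound $\|V_I V_J^{-1}\|^p \lesssim 1$ for $J \in \F(I)$ is immediate from the stopping rule \eqref{STDef} itself, so these estimates go through.

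Next I would run verbatim the duality argument from the proof of the second inequality in Theorem \ref{MainThm}. Given $\vec f \in C_c^\infty(\Rd)$, set $\vec\Psi_k = \sum_{I \in F_k}\sum_{\varepsilon \in \S} V_I^{-1} \vec f_I^\varepsilon h_I^\varepsilon$, and for $\vec g \in \F$ pair against $\vec g_k$: as in the excerpt,
\begin{align*}
|\langle \vec\Psi_k, \vec g\rangle_{L^2}|
&= \left| \sum_{I \in \D}\sum_{\varepsilon \in \S} \langle \vec f_I^\varepsilon, V_I^{-1}(\vec g_k)_I^\varepsilon\rangle_{\C}\right| \\
&\leq \inrd |\langle W^{-1/p}\vec f,\, W^{1/p} M_{W,p}^{-1}\vec g_k\rangle_{\C}|\,dx \\
&\lesssim \|W^{-1/p}\vec f\|_{L^{p'}} \|\vec g\|_{L^p},
\end{align*}
where the last step is H\"older's inequality followed by the A${}_{p,\infty}$ version of Lemma \ref{WtdHaarMultThm} (the roles of exponents are fine since $M_{W,p}^{-1}\vec g_k$ has finite Haar expansion). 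Taking the supremum over $\vec g \in \F$ with $\|\vec g\|_{L^p} \leq 1$ and using that $\F$ is dense in $L^p$, we get $\sup_k \|\vec\Psi_k\|_{L^{p'}} \lesssim \|W^{-1/p}\vec f\|_{L^{p'}} = \|\vec f\|_{L^{p'}(W^{1-p'})}$. Then dyadic Littlewood-Paley theory applied to $\vec\Psi_k$ together with the monotone convergence theorem (exactly as in the excerpt, since $F_k \uparrow \D$) identifies $\lim_k \|\vec\Psi_k\|_{L^{p'}}$ with the square-function expression $\bigl(\inrd(\sum_{I\in\D}\sum_{\varepsilon}|V_I^{-1}\vec f_I^\varepsilon|^2|I|^{-1}\chi_I)^{p'/2}\,dx\bigr)^{1/p'}$, which yields the claimed bound for $\vec f \in C_c^\infty(\Rd)$. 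Finally I would remove the smoothness assumption: $C_c^\infty(\Rd)$ is dense in $L^{p'}(W^{1-p'})$ (Proposition $3.7$ in \cite{CMR}, noting $W^{1-p'}$ is a matrix A${}_{p',\infty}$ weight so the cited density still applies, or more simply local integrability of $W^{1-p'}$ suffices), approximating $\vec f$ by $\vec f_k \in C_c^\infty$ with $(\vec f_k)_I^\varepsilon \to \vec f_I^\varepsilon$ for every $I,\varepsilon$, and conclude by Fatou's lemma on the square-function side.

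The only genuinely delicate point is the justification that Lemma \ref{WtdHaarMultThm} survives the passage to A${}_{p,\infty}$; but this was already flagged in the remark following that lemma, so I expect no real obstacle---every estimate in its proof used either the stopping rule \eqref{STDef} directly, the decay in Lemma \ref{DSTLem} (valid for A${}_{p,\infty}$ by \cite{V}), or the scalar reverse H\"older inequality for $|W^{1/p}(x)\vec e|^p$ (which holds because each such scalar function is a scalar A${}_p$, hence A${}_\infty$, weight). The quantitative A${}_p$-characteristic bookkeeping is of course lost in the A${}_{p,\infty}$ setting, but Theorem \ref{ApinfThm} is stated only qualitatively, so this causes no difficulty.
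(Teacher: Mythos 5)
Your proposal is correct and is essentially the paper's own argument: the paper obtains Theorem \ref{ApinfThm} precisely by noting (in the remark after Lemma \ref{WtdHaarMultThm}) that the lemma survives the passage to A${}_{p,\infty}$ via Lemma $3.1$ of \cite{V} and the reverse H\"older inequality for $|W^{1/p}(x)\vec e|^p$, and then running the duality portion of the proof of Theorem \ref{MainThm} only up to the $V_I^{-1}$ square function, before the matrix A${}_p$ condition is invoked to pass to $V_I'$. Your bookkeeping of where the full A${}_p$ condition is (and is not) used matches what the paper intends.
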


Finally, note that when $p = 2$ and $W$ is a matrix A${}_2$ weight, our results considerably simplify to the inequalities  \begin{equation*} \|\vec{f}\|_{L^2(W)} \lesssim  \|W\|_{\text{A}_2} ^\frac{3}{2} \left(\sum_{I \in \D} \sum_{\varepsilon \in \S} |(m_I W)^\frac12  \vec{f}_I ^\varepsilon|^2 \right)^\frac{1}{2} \end{equation*} and \begin{equation*} \left(\sum_{I \in \D} \sum_{\varepsilon \in \S} |(m_I W)^\frac12  \vec{f}_I ^\varepsilon|^2 \right)^\frac{1}{2} \lesssim \|W\|_{\text{A}_2} ^{ 2} \|\vec{f}\|_{L^2(W)}.  \end{equation*} Note that these ``square function bounds" were obtained first in \cite{P} (where the A${}_2$ characteristic dependence was not tracked)  also using stopping techniques that are similar to the ones in \cite{KP}. Moreover, these square function bounds were proved in \cite{BPW, CW} using vastly different techniques but with the better bounds of $\|W\|_{\text{A}_2} ^\frac12  (\log \|W\|_{\text{A}_2} )^\frac12 $ and $\|W\|_{\text{A}_2}  (\log \|W\|_{\text{A}_2} )^\frac12 $, respectively.  Finally, it is well known and was proved in \cite{PP, HTV} that in the scalar A${}_2$ setting the bounds of $\|W\|_{\text{A}_2}  ^\frac12 $ and $\|W\|_{\text{A}_2} $ are sharp.  It appears to be a very interesting but challenging problem as to whether these bounds hold in the matrix weighted setting.

\begin{bibdiv}
\begin{biblist}

\bib{BPW}{article}{
author={Bickel, K.},
author={Petermichl, S.},
author={Wick, B,},
title={Bounds for the Hilbert Transform with Matrix $A_2$ Weights},
journal={Preprint available at \href{http://arxiv.org/abs/1402.3886}{http://arxiv.org/abs/1402.3886}.},
}

\bib{CMR}{article}{
    author={Cruz-Uribe SFO, D.  },
    author={Moen, K.},
    author={Rodney, S.},
    title={Matrix A${}_p$ weights, degenerate Sobolev spaces, and mappings of finite distortion},
    journal={Preprint available at \href{http://arxiv.org/pdf/1505.00699.pdf}{http://arxiv.org/pdf/1505.00699.pdf}.},
}

\bib{CW}{article}{
author={Culiuc, A.}
author={Wick, B.}
journal={Preprint}
}

\bib{FJW}{book}{
    author={Frazier, M.},
    author={Jawerth, B.},
    author={Weiss, G.},
    title={Littlewood-Paley theory and the study of function spaces},
    publisher={CBMS Regional Conference Series in Mathematics},
    date={1991},
     pages={ viii+132 pp.},
    review={\MR{1107300}}
    }

\bib{G}{article}{
    author={ Goldberg, M.},
    title={ Matrix $A_p$ weights via maximal functions},
    journal={Pacific J. Math.},
    volume={211},
    date={2003},
    pages={201 - 220},
    review={\MR{2015733}}
  }

\bib{HTV}{article}{
    author={Hukovic, S.},
    author={Treil, S.},
    author={Volberg, A.},
    title={The Bellman functions and sharp weighted inequalities for square functions},
    journal={Oper. Theory Adv. Appl.,},
    volume={113},
    date={2003},
    pages={97 - 113},
    review={\MR{1771755}}
  }

\bib{I}{article}{
author={Isralowitz, J.},
title={A Matrix weighted T$1$ theorem for matrix kernelled CZOs},
journal={Preprint},
}

\bib{IKP}{article}{
author={Isralowitz, J.},
author={Kwon, H. K. },
author={Pott, S.},
title={Matrix weighted norm inequalities for commutators and paraproducts with matrix symbols},
journal={Preprint available at \href{http://arxiv.org/abs/1507.04032}{http://arxiv.org/abs/1507.04032}.},
}

\bib{KP}{article}{
    author={Katz, N. H.},
    author={Pereyra, M. C.},
    title={Haar multipliers, paraproducts, and weighted inequalities},
    journal={Appl. Numer. Harmon. Anal.},
    volume={Analysis of divergence},
    date={1997},
    pages={145 - 170},
    review={\MR{1731264}}
  }

\bib{NT}{article}{
    author={Nazarov, F.},
    author={Treil, S.},
    title={The hunt for a Bellman function: applications to estimates for singular integral operators and to other classical problems of harmonic analysis},
    journal={Algebra i Analiz},
    volume={8},
    date={1996},
    pages={32 - 162},
    review={\MR{1428988}}
  }

\bib{P}{article}{
    author={Pott, S.},
    title={A sufficient condition for the boundedness of operator-weighted martingale transforms and Hilbert transform},
    journal={Studia Math.},
    volume={182},
    date={2007},
    pages={99 - 111},
    review={\MR{2338479 }}
  }

\bib{PP}{article}{
    author={Petermichl, S.},
    author={Pott, S.},
    title={An estimate for weighted Hilbert transform via square functions,}
    journal={Trans. Amer. Math. Soc.}
    volume={354}
    date={2002}
    pages={1699 - 1703}
    review={\MR{1873024}}
    }

\bib{R}{article}{
    author={Roudenko, S.},
    title={Matrix-weighted Besov spaces},
    journal={Trans. Amer. Math. Soc.},
    volume={355},
    date={2003},
    pages={273 - 314},
    review={\MR{1928089}}
    }

\bib{TV}{article}{
    author={Treil, S.},
    author={Volberg, A.}
    title={Wavelets and the angle between past and future,}
    journal={J. Funct. Anal.}
    volume={143}
    date={1997}
    pages={269 – 308}
    review={\MR{1428818}}}

\bib{V}{article}{
    author={Volberg, A.},
    title={Matrix A${}_p$ weights via $S$-functions},
    journal={J. Amer. Math. Soc.},
    volume={10},
    date={1997},
    pages={445 - 466},
    review={\MR{1423034}}
  }

\end{biblist}
\end{bibdiv}

 \end{document}